\newtheorem{theorem}{Theorem}[section]
\newtheorem{lemma}[theorem]{Lemma}
\newtheorem{corollary}[theorem]{Corollary}
\newenvironment{prf}[1]{\trivlist
\item[\hskip
\labelsep{\it #1.\hspace*{.3em}}]}{%~\hspace{\fill}~$\square$%
\endtrivlist}
\newtheorem{predefinition}[theorem]{Definition}
\newtheorem{preremark}[theorem]{Remark}
\newtheorem{prenotation}[theorem]{Notation}
\newtheorem{preexample}[theorem]{Example}
\newenvironment{example}{\begin{preexample}\rm}{\end{preexample}}
\newtheorem{preclaim}[theorem]{Claim}
\newtheorem{prequestion}[theorem]{Question}
\def\emppsubsection{\@startsection{subsection}{2}{\z@}{-3.25ex plus -1ex minus -.2ex}{-1em}}
\def\FF{{\mathbb F}}
\begin{document}

\title{Pointless Hyperelliptic Curves}
\author{Ryan Becker}
\ead{beckry01@gettysburg.edu}
\address{Department of Mathematics, Gettysburg College, 200 N. Washington Street, Gettysburg PA 17325}

\author{Darren Glass}
\ead{dglass@gettysburg.edu}
\address{Department of Mathematics, Gettysburg College, 200 N. Washington Street, Gettysburg PA 17325}

\begin{keyword}
Hyperelliptic Curves \sep Finite Fields
\end{keyword}

\begin{abstract}

In this paper we consider the question of whether there exists a hyperelliptic curve of genus $g$ which is defined over $\FF_q$ but has no rational points over $\FF_q$ for various pairs $(g,q)$.

\end{abstract}

\maketitle

\section{Introduction and Background}

The question of constructing curves of a given genus with a given number of points over a finite field is one that many mathematicians have worked on.  Because of applications in coding theory, most of the energy has been spent trying to find curves of a fixed genus with as many points as possible -- much of this work is described at the website \url{http://www.manYPoints.org} \cite{MP}, maintained by Gerard van der Geer and others.  Interested readers may wish to consult \cite{FG}, \cite{GK}, \cite{HKT}, \cite{L2} among other papers.  While it may lack immediate applications, it is nonetheless an interesting mathematical question to consider how few points a curve of a given genus might have, and in particular whether there exist curves of a given genus that do not have any points defined over a fixed finite field.  For small genera, this question was considered in \cite{HLT} and \cite{Stark}, where the authors  proved the following results.

\begin{theorem}\label{T:smallgenus}
The following conditions on the existence of pointless curves are both necessary and sufficient:
\begin{itemize}
\item There exist pointless curves of genus $2$ defined over $\FF_q$ if and only if $q<13$.
\item There exist pointless curves of genus $3$ defined over $\FF_q$ if and only if $q \le 25, q=29$ or $q=32$.
\item There exist pointless hyperelliptic curves of genus $3$ defined over $\FF_q$ if and only if $q \le 25$.
\item There exist pointless curves of genus $4$ defined over $\FF_q$ if and only if $q \le 49$.
\end{itemize}
\end{theorem}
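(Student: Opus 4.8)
The plan is to treat each genus $g \in \{2,3,4\}$ separately and, within each, to prove the two implications independently: \emph{necessity}, an upper bound on $q$ above which every curve of that genus (or every hyperelliptic one, in the third bullet) must acquire a rational point, and \emph{sufficiency}, an explicit pointless curve for each admissible $q$. The engine for necessity is the Weil bound. Writing $N = \#C(\FF_q)$ and letting $\alpha_1,\bar\alpha_1,\dots,\alpha_g,\bar\alpha_g$ be the Frobenius eigenvalues (each of absolute value $\sqrt q$), we have $N = q+1 - \sum_i(\alpha_i+\bar\alpha_i)$, whence $N \ge q+1 - 2g\sqrt q$. Demanding $N=0$ already forces $q+1 \le 2g\sqrt q$, giving the rough thresholds $q \le 13$ (genus $2$), $q \le 33$ (genus $3$) and $q \le 61$ (genus $4$). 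Serre's refinement $N \ge q+1 - g\lfloor 2\sqrt q\rfloor$ trims these slightly, but the true thresholds are strictly smaller, so all the real work happens at the boundary.

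For the boundary values I would pass to the Weil polynomial $P(t) = \prod_i(t-\alpha_i)(t-\bar\alpha_i) \in \Z[t]$ and exploit the additional constraints it must satisfy: its roots lie on $|z|=\sqrt q$; every $N_r = q^r+1-\sum_i(\alpha_i^r+\bar\alpha_i^r)$ must be a non-negative integer; and, crucially, for $P$ to come from a genuine curve rather than merely an abelian variety it must survive the obstructions of Howe and Lauter (the resultant and real-Weil-number criteria that detect when an isogeny class contains no Jacobian). The strategy is to enumerate the finitely many $P$ compatible with $N=0$ and the Weil/Serre positivity bounds, then eliminate each surviving candidate either by Honda--Tate (it is not the Weil polynomial of any abelian variety) or by the Jacobian obstruction (it is, but the associated—typically non-simple—abelian variety is not a Jacobian). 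This is exactly what closes the gaps: $q=13$ in genus $2$, $q\in\{27,31\}$ in genus $3$, and $q\in\{53,59,61\}$ in genus $4$.

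For sufficiency I would exhibit explicit pointless curves. In the hyperelliptic case a curve $y^2=f(x)$ is pointless precisely when $\deg f = 2g+2$ with non-square leading coefficient (to kill the points at infinity) and $f(x)$ is a non-square in $\FF_q$ for every $x\in\FF_q$ (to kill the affine points); for each small $q$ one writes down such an $f$, and for a uniform argument a counting/dimension estimate shows the family of degree-$(2g+2)$ polynomials is large enough that some member avoids all squares when $q$ is small. This is also where the hyperelliptic threshold $q\le 25$ in the third bullet separates from the genus-$3$ threshold: at $q\in\{29,32\}$ pointless genus-$3$ curves exist but must be \emph{non}-hyperelliptic. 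For those and for the non-hyperelliptic cases generally (plane quartics for genus $3$, and $(2,3)$-complete intersections in $\mathbb{P}^3$ for genus $4$) I would instead prescribe a Weil polynomial with $N=0$ that \emph{does} arise from a Jacobian—building the target as a product of supersingular or CM elliptic curves and abelian surfaces and invoking the Howe--Lauter existence results to realize it by a curve.

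The main obstacle is the boundary analysis in the necessity direction. Away from the threshold the Weil bound decides everything at once, but at the critical $q$ it holds with room to spare, so one must descend to individual Weil polynomials and decide, candidate by candidate, whether each is realized by a Jacobian. This is delicate: the positivity conditions $N_r\ge 0$ must be verified for enough $r$, and the Howe--Lauter obstructions demand genuine input from the arithmetic of the underlying abelian varieties. Making the sufficiency constructions meet the necessity bound exactly—so that the admissible and inadmissible $q$ partition cleanly with no gap left unaccounted for—is where the bulk of the effort resides.
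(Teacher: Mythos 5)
The paper does not actually prove this theorem: it is quoted as background, with the proofs living in the cited references \cite{HLT} and \cite{Stark}. So there is no in-paper argument to compare against; what can be said is that your strategy is, in outline, precisely the one used in those sources. The Weil--Serre bounds give the coarse thresholds you compute ($q\le 13$, $q\le 33$, $q\le 61$ for $g=2,3,4$), and the remaining boundary prime powers ($q=13$; $q\in\{27,31\}$; $q\in\{53,59,61\}$) are killed by enumerating candidate Weil polynomials with $N=0$ and eliminating each via Honda--Tate or the Howe--Lauter obstructions to an isogeny class containing a Jacobian, while sufficiency comes from explicit curves and realization results. You have correctly identified both the method and exactly where the work is concentrated.

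That said, as a proof the proposal has genuine gaps: at every decisive point it describes what must be done without doing it. (i) No candidate Weil polynomial at any boundary $q$ is actually enumerated or eliminated; this is the entire content of the necessity direction, and the Howe--Lauter resultant computations are substantial, case-by-case work. (ii) No explicit pointless curve is exhibited for even one pair $(g,q)$, nor is the counting argument for the existence of an everywhere-nonsquare $f$ of degree $2g+2$ carried out. (iii) The third bullet gets only the assertion that pointless genus-$3$ curves over $\FF_{29}$ and $\FF_{32}$ "must be non-hyperelliptic," with no mechanism. The natural mechanism is the quadratic-twist duality that the surrounding paper itself exploits in Section \ref{S:bounds}: a pointless hyperelliptic curve of genus $g$ over $\FF_q$ exists if and only if some hyperelliptic curve has $2q+2$ points, and for $(g,q)=(3,29)$ and $(3,32)$ one has $2q+2 = q+1+g\lfloor 2\sqrt{q}\rfloor$, so one must rule out hyperelliptic curves meeting the Serre bound exactly --- a nontrivial extra step your sketch does not flag. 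In short: the route is the right one, but the substance of the proof is deferred rather than supplied.
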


The results of Theorem \ref{T:smallgenus} fix a genus and let the field vary; in this note, we take the complementary point of view and fix our finite field and consider for which genera there exists a pointless curve.  A recent result of Stichtenoth in \cite{Sti} proved that for each finite field $\FF_q$, there exists a number $g_q$ so that for all $g \ge g_q$ there is a pointless curve over $\FF_q$ of genus $g$.  In this note, we consider the analogous question for hyperelliptic curves.  In particular, Section \ref{S:bounds} gives two types of explicit constructions of pointless hyperelliptic curves of various genera which give bounds on $g_q$.  One of these relies on the value of $g$ mod $q$ and the other on the value of $g$ mod $q-1$.  Examples of such results are:

\begin{corollary}\label{C:Bound}
Let $a$ be the least residue of $g$ mod $p$.  Then there exist pointless hyperelliptic curves of genus $g$ over $\FF_p$ if $g \ge (p-a)(p-2)$.  In particular, there will be pointless hyperelliptic curves of every genus $g \ge  \frac {(p+1)(p-2)}{2}$.
\end{corollary}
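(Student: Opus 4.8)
The plan is to reduce the corollary to the two explicit constructions of Section~\ref{S:bounds} together with an elementary optimization over residues. First I would recall the standard criterion for pointlessness: a hyperelliptic curve $y^2 = f(x)$ over $\FF_p$ (with $p$ odd) has no $\FF_p$-rational point exactly when $f$ is separable of \emph{even} degree $2g+2$, its leading coefficient is a non-square, and $f(x_0)$ is a non-square for every $x_0 \in \FF_p$. The even-degree requirement is forced because an odd-degree model carries a single, automatically rational, point at infinity; the leading-coefficient condition kills the two points at infinity in the even-degree case; and the value condition kills all affine points (note that $f(x_0)$ being a non-square in particular forbids $f(x_0)=0$). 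Thus the whole problem is to produce, for the target $g$, a separable $f$ of degree $2g+2$ with non-square leading coefficient whose values on $\FF_p$ are all non-squares.

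For the first assertion I would invoke the mod-$p$ construction of Section~\ref{S:bounds} directly. Its mechanism is to fix a non-square $s$ and build $f$ as $s$ times a product of degree-$p$ polynomials of the form $x^p - x - c$; since $x^p - x$ vanishes identically on $\FF_p$, each such factor equals the constant $-c$ at every point of $\FF_p$, so the value of $f$ on $\FF_p$ is the single constant $s\prod(-c)$, which one arranges to be a non-square, while distinctness of the $c$'s keeps $f$ separable. Matching the degree $2g+2$ to a multiple of $p$ plus a correction term whose size is governed by $a = g \bmod p$ is exactly what produces the threshold $g \ge (p-a)(p-2)$; once the minimal genus in a residue class is reached, adjoining pairs of such factors (degree $2p$, contributing a square to the values) walks up the entire class, so every larger $g$ in that class is also attained.

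For the ``in particular'' clause I would combine this with the complementary construction of Section~\ref{S:bounds} keyed to $g \bmod (p-1)$. The point is that the mod-$p$ construction alone leaves gaps: for the small residues $a$ (notably $a=0,1$) the quantity $(p-a)(p-2)$ is largest, so the least genus reached in those classes sits well above $(p+1)(p-2)/2$. I would therefore run a residue-by-residue comparison, using the mod-$p$ bound on the classes where it already drops below $(p+1)(p-2)/2$ and the mod-$(p-1)$ bound to cover the remaining classes, and then check that the union of the two families of attainable genera contains every integer $g \ge (p+1)(p-2)/2$. The constant $(p+1)(p-2)/2$ should emerge as precisely the crossover at which the better of the two bounds first dominates for every residue simultaneously.

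The main obstacle I anticipate is the bookkeeping in this last step: one must verify that the two arithmetic progressions of attainable genera interleave with no gap all the way down to the worst residue, rather than only for typical residues, and simultaneously confirm separability of each explicitly exhibited $f$ (so that the curve really has the claimed genus and is not a degenerate or non-reduced model). Once separability is in hand for both constructions and the residue optimization is checked, the two displayed inequalities follow.
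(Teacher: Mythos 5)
Your pointlessness criterion is fine and is equivalent to the paper's quadratic-twist reduction, but the construction you attribute to Section~\ref{S:bounds} is not the one there, and as described it cannot prove the first assertion. The paper's mod-$p$ construction is the trinomial $f(x)=x^{2g+2}-x^{2g+2-l(p-1)}+1$ with $l\equiv -(2g+2) \pmod{p}$, which takes the value $1$ at every $x\in\FF_p$ and is shown separable by the computation $f'(x)=(2g+2)x^{2g+1}$ in $\FF_p[x]$; the threshold $(p-a)(p-2)$ then comes out of the requirement $2g+2>l(p-1)$ for an admissible choice of $l$. Your construction $s\prod_i(x^p-x-c_i)$ only produces polynomials whose degree is divisible by $p$, i.e.\ it only reaches genera with $g\equiv p-1 \pmod p$. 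For every other residue $a$ you need the ``correction term,'' and that factor --- a polynomial of degree $\not\equiv 0 \pmod p$, coprime to the Artin--Schreier part, all of whose values on $\FF_p$ have constant quadratic character --- is precisely where the whole difficulty (and the dependence of the bound on $a$) lies; you never exhibit it, so the inequality $g\ge(p-a)(p-2)$ is asserted rather than derived. (A smaller unaddressed point: ``adjoining pairs of such factors'' to walk up a residue class exhausts the at most $p-1$ distinct values $c_i\in\FF_p^*$, so unbounded genera would force you to use conjugate pairs $c\in\FF_{p^2}\setminus\FF_p$.)

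The plan for the ``in particular'' clause also contains a step that fails. You propose to cover the residues with $(p-a)(p-2)>\frac{(p+1)(p-2)}{2}$, i.e.\ $a\le\frac{p-3}{2}$, by the mod-$(p-1)$ construction, but that result (Corollary~\ref{C:relprime}) carries the hypothesis $(g+1,\frac{p-1}{2})=1$, which fails for infinitely many $g$ in each residue class mod $p$. Concretely, take $p=7$ and $g=29$: then $g\ge\frac{(p+1)(p-2)}{2}=20$, the residue is $a=1$ so $(p-a)(p-2)=30>29$, and $\gcd(g+1,\frac{p-1}{2})=\gcd(30,3)=3$, so neither of your two families contains $g=29$. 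The paper closes exactly these small-residue classes with no coprimality condition, using the second, sharper bound inside Theorem~\ref{T:modp}: for $a\le\frac{p-3}{2}$ one may take $l=p-(2a+2)$ instead of $2p-(2a+2)$, giving the threshold $\frac{p-1}{2}(p-2a-2)\le\frac{(p-1)(p-2)}{2}\le\frac{(p+1)(p-2)}{2}$. Combined with the observation that the theorem's first bound satisfies $(p-a-1)(p-1)\le(p-a)(p-2)$ whenever $a\ge 1$, this specialization of Theorem~\ref{T:modp} to $q=p$ yields both assertions of the corollary at once, which is the paper's entire proof.
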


\begin{corollary}\label{C:relprime}
Let $(g+1,\frac{p-1}{2}) = 1$ and $g\ge \frac{p-1}{2}$.  Then there is a pointless hyperelliptic curve of genus $g$ over $\FF_p$.
\end{corollary}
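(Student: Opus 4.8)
The plan is to use the standard criterion for pointlessness of a hyperelliptic curve $y^2 = f(x)$ with $f \in \FF_p[x]$ squarefree of degree $2g+2$: such a curve is pointless over $\FF_p$ exactly when (i) $f(a)$ is a nonzero non-square for every $a \in \FF_p$ (no affine rational points) and (ii) the leading coefficient of $f$ is a non-square, so that the two points at infinity are conjugate over $\FF_{p^2}$ and hence not rational. So I would fix a non-square $n \in \FF_p^{*}$ and look for a squarefree $f$ of degree $2g+2$ with leading coefficient $n$ all of whose values on $\FF_p$ land in the non-square coset $n\cdot(\FF_p^{*})^2$.

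To produce such an $f$ I would exploit the multiplicative structure of $\FF_p^{*}$ (rather than $x^p-x$), which is the sense in which this construction depends on $g$ modulo $p-1$. The proposal is to take
\[
f(x) \;=\; n\bigl(x^{2g+2} - x^{p-1} + 1\bigr).
\]
Because $2g+2 \ge p+1 > p-1$ — and this inequality is exactly the hypothesis $g \ge \frac{p-1}{2}$ — the polynomial $f$ has degree $2g+2$ with leading coefficient $n$, so (ii) holds. For the values, Fermat gives $a^{p-1}=1$ for $a \in \FF_p^{*}$, whence $f(a) = n\bigl(a^{2g+2} - 1 + 1\bigr) = n\,(a^{g+1})^2$, a non-square; and $f(0) = n\cdot 1$ is a non-square as well. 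Thus (i) holds at every point of $\FF_p$, and the curve has neither affine rational points nor rational points at infinity.

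The remaining, and main, step is to show that $f$ is squarefree, equivalently that $g_0(x) := x^{2g+2} - x^{p-1} + 1$ has no repeated root in $\overline{\FF_p}$; only then does $y^2 = f(x)$ have genus exactly $g$ and define a smooth hyperelliptic curve. I would compute $g_0'(x) = 2(g+1)x^{2g+1} - (p-1)x^{p-2} = x^{p-2}\bigl(2(g+1)x^{2g+3-p}+1\bigr)$. Since $g_0(0)=1\neq 0$, any repeated root $\rho$ is nonzero and satisfies $\rho^{2(g+1)-(p-1)} = -\frac{1}{2(g+1)}$; combining this with $g_0(\rho)=0$ and $\rho^{2g+2}=\rho^{p-1}\rho^{2(g+1)-(p-1)}$ forces $\rho^{p-1} = \frac{2(g+1)}{2g+3}$. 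Eliminating $\rho$ turns the existence of a repeated root into a compatibility condition between these two power equations, governed by $\gcd\bigl(2(g+1),\,p-1\bigr)$. This is exactly where the hypothesis enters: $\gcd\bigl(g+1,\frac{p-1}{2}\bigr)=1$ forces $\gcd\bigl(2(g+1),p-1\bigr)=2$, and I expect this to be precisely what makes the two conditions on $\rho$ incompatible, so that $g_0$ is squarefree. Establishing this incompatibility cleanly is the step I expect to be the crux, the value computation and the condition at infinity being routine by comparison; the harmless case $p \mid g+1$ should be treated separately, where $g_0'(x)=x^{p-2}$ and squarefreeness is immediate from $g_0(0)\neq 0$.
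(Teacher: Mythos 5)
Your construction is, up to a quadratic twist and reversing the order of the coefficients, the same one the paper uses: $x^{2g+2}-x^{p-1}+1$ is the reciprocal polynomial of the $x^{2g+2}-x^{2g+3-p}+1$ appearing in Lemma~\ref{T:modp'}, so the two have repeated roots for exactly the same pairs $(g,p)$; your value computation, the treatment of the points at infinity, and the easy case $p\mid g+1$ are all correct. The gap is at the step you yourself flag as the crux, and it is not a missing detail: the claimed incompatibility is \emph{false}. Combining your two equations gives $\rho^{2g+2}=A:=-\frac{1}{2g+3}$ and $\rho^{p-1}=B:=\frac{2g+2}{2g+3}$. When $\gcd(2g+2,p-1)=2$, i.e. $\gcd(g+1,p')=1$ with $p'=\frac{p-1}{2}$, every integer relation $u(2g+2)+v(p-1)=0$ is a multiple of $p'(2g+2)-(g+1)(p-1)=0$, and since every element of $\overline{\FF_p}^{\,*}$ has roots of all orders, a common solution $\rho$ exists if and only if $A^{p'}=B^{\,g+1}$ in $\FF_p^*$. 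Your hypothesis does not kill this condition; it only forces $\rho^2$ to be one explicitly determined element of $\FF_p^*$, and whether that element really yields a repeated root is a congruence-plus-quadratic-residue condition that can go either way --- this is precisely what Theorem~\ref{T:rr} computes. Concretely, take $p=11$, $g=7$, so $\gcd(g+1,p')=\gcd(8,5)=1$ and $g\ge 5$. If $\rho^2=7$ in $\FF_{121}$, then $\rho^{6}=7^3\equiv 2$ and $\rho^{10}=7^5\equiv -1\pmod{11}$, so
\[
g_0(\rho)=\rho^{10}\bigl(\rho^{6}-1\bigr)+1=(-1)(2-1)+1=0,
\qquad
g_0'(\rho)=\rho^{9}\bigl(5\rho^{6}+1\bigr)=11\,\rho^{9}\equiv 0,
\]
hence $(x^2-7)^2$ divides $x^{16}-x^{10}+1$ over $\FF_{11}$: your curve is singular, and its normalization has genus smaller than $7$, even though all hypotheses of the corollary hold.

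This is why the paper does not (and cannot) prove the corollary from this single family. Instead, Theorem~\ref{T:rr} classifies exactly when the polynomial acquires a repeated root ($g$ in an explicit residue class mod $pp'$, together with conditions on $p$ mod $8$ and $j$ mod $4$), and the paper's proof then switches on those bad classes to the second, mod-$p$ family of Theorem~\ref{T:modp}: in the bad cases $2(2g+2)\equiv -1\pmod p$, so choosing $l\equiv\frac{p+1}{2}\pmod p$ makes $y^2=x^{2g+2}-x^{2g+2-l(p-1)}+1$ nonsingular with $2p+2$ points, provided $g>\frac{p^2-5}{4}$. So a correct argument needs both constructions plus the classification of when the first one fails; the obstruction is a genuine arithmetic phenomenon, not something the gcd hypothesis removes. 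It is worth knowing that your failing case $(g,p)=(7,11)$ is exactly one of the leftover cases the paper's own proof flags as ``not covered,'' because there $g\le\frac{p^2-5}{4}$ and the fallback construction is unavailable (such genera are instead reachable by other means, e.g.\ Theorem~\ref{T:double} applied to a pointless genus-$3$ curve over $\FF_{11}$). In short: the corollary is delicate, and a one-polynomial argument of the shape you propose cannot close it.
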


In Section \ref{S:numerics}, we combine these results as well as some results about fibre products of hyperelliptic curves to give explicit numerical bounds for when pointless hyperelliptic curves can exist over a specific finite field.  These results show that in general there is a linear bound on $q$ above which one can obtain all genera.  We note that Serre's bound says that there is a lower bound on the order of $\sqrt{q}$ below which we cannot obtain any pointless hyperelliptic curves.  In future work, we hope to explore the gap between these two bounds.

\section{Existence Results} \label{S:bounds}

Before we prove our main results, we begin by introducing some notation.  Let $\FF_q$ be the field of odd order $q=p^r$ and let $C$ be a hyperelliptic curve of genus $g$ defined over $\FF_q$ by the equation $y^2=f(x)$.  Let $n$ be the number of points of $C$ defined over $\FF_q$.  If we choose $a \in \FF_q$ to be a nonsquare and define $\tilde{C}$ to be the quadratic twist of $C$ given by $y^2=af(x)$, then $\tilde{C}$ will be a hyperelliptic curve of genus $g$ with $2q+2-n$ points defined over $\FF_q$.  In particular, there will be pointless hyperelliptic curves over $\FF_q$ if and only if there are curves with $2q+2$ points over $\FF_q$. It is often convenient to consider these curves, which have the maximal number of points allowable for hyperelliptic curves, instead of curves with no points.

As an example of this approach, consider the curve $C_g$, defined by the equation \[y^2 = f(x) = x^{2g+2}-x^{2g+2-q+1}+1\]  where $2g+2 \ge q$.  It is clear that for any $x \in \FF_q$, $f(x) = 1$ and therefore there will be two points lying over each $x$ value.  Moreover, $f(x)$ is monic of even degree, so there are also two points lying over $x = \infty$.  It follows that $C_g$ has $2q+2$ points over $\FF_q$ and that the quadratic twist $\tilde{C_g}$ is pointless.  In order to show that the curves $C_g$ and $\tilde{C_g}$ have genus $g$, it suffices to show that they are nonsingular. In particular, we wish to show that $f(x)$ has no repeated roots over $\overline{\FF_q}$.  It will follow that $\tilde{C_g}$ is a pointless hyperelliptic curve of genus $g$.

\begin{theorem}\label{T:modp}
Let $a$ be the least residue of $g$ mod $p$.  There exists a $2p+2$-pointed curve of genus $g$ defined over $\FF_q$ if $g \ge (p-a-1)(q-1)$. If $0 \le a \le \frac{p-3}{2}$, then there exists a $2p+2$-pointed curve of genus $g$ if $g \ge \frac{q-1}{2}(p-2a-2)$.
\end{theorem}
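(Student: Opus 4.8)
The plan is to generalize the example curve $C_g$ from the paragraph preceding the theorem by allowing a larger ``gap'' between the exponents. For a positive integer $j$ to be chosen, I would take $C\colon y^2=f(x)$ with
\[
f(x) = x^{2g+2} - x^{2g+2-j(q-1)} + 1,
\]
subject to $1\le j(q-1)\le 2g+1$ so that both exponents are admissible and the lower one is positive. For any $x\in\FF_q$ one has either $x=0$, where $f(0)=1$, or $x\in\FF_q^{*}$, where $x^{q-1}=1$ forces $x^{j(q-1)}=1$ and hence $f(x)=1$; in particular $f$ is identically the nonzero square $1$ on $\FF_p\subseteq\FF_q$. This produces two rational points over each element of $\FF_p$, and since $f$ is monic of even degree $2g+2$ its leading coefficient is a square and there are two further points at infinity, giving the $2p+2$ points demanded by the theorem for every admissible $j$. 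The only remaining task is to choose $j$ so that $C$ genuinely has genus $g$, i.e.\ so that $f$ is nonsingular.

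As in the discussion of $C_g$, nonsingularity amounts to $f$ having no repeated root in $\overline{\FF_q}$, which I would test through $\gcd(f,f')$. Writing $N=2g+2$ and $M=2g+2-j(q-1)$ and reducing modulo $p$, one finds $\overline{N}\equiv 2a+2$ and $\overline{M}\equiv 2a+2+j$, so that
\[
f'(x) = x^{M-1}\bigl(\overline{N}\,x^{j(q-1)} - \overline{M}\bigr).
\]
Since $f(0)=1\neq 0$ it suffices to show $f$ shares no root with $\overline{N}x^{j(q-1)}-\overline{M}$. At any such candidate one has $x^{j(q-1)}=\overline{M}/\overline{N}$, and substituting back gives the clean identity $f(x)=(\overline{M}-\overline{N})\overline{N}^{-1}x^{M}+1 = j\,\overline{N}^{-1}x^{M}+1$. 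The key structural observation is therefore that if $p\mid j$ then $f(x)=1\neq 0$ at every candidate, so $f$ is automatically squarefree; more generally $f$ fails to be squarefree only when the two power conditions $x^{j(q-1)}=\overline{M}/\overline{N}$ and $x^{M}=-\overline{N}/j$ are simultaneously solvable, which (as $\overline{M}/\overline{N}$ and $-\overline{N}/j$ lie in $\FF_p^{*}$ and $(p-1)\mid j(q-1)$) reduces to a single congruence among roots of unity in $\FF_p^{*}$.

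With this criterion in hand the proof becomes a matter of exhibiting a good $j$ inside the admissible window $1\le j\le \lfloor (2g+1)/(q-1)\rfloor$. I would first dispose of the case $a=p-1$ (where $\overline{N}\equiv 0$, so $f'=-\overline{M}x^{M-1}$ has only the root $x=0$), giving squarefreeness for all $g$, consistent with the vanishing bound $(p-a-1)(q-1)=0$. For $a\le p-2$ the first bound $g\ge(p-a-1)(q-1)$ guarantees that the window contains at least $2(p-a-1)$ admissible values of $j$, and I would argue that the forbidden residue classes of $j$ modulo $p$ (those for which the root-of-unity congruence holds, together with the class making $\overline{M}/\overline{N}$ degenerate) are few enough that a permissible $j$ must occur. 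The sharper bound $\frac{q-1}{2}(p-2a-2)$ for $0\le a\le\frac{p-3}{2}$ would come from a more economical choice: because $\overline{N}=2a+2$ is small in this regime, one can select $j\le p-2a-2$ so that $\overline{M}/\overline{N}$ lands outside the relevant subgroup of $\FF_p^{*}$, effectively halving the required window. The delicate heart of the argument, and the main obstacle, is exactly this counting step: proving that the admissible window always meets the set of ``good'' $j$, and pushing the estimates hard enough to recover the two stated bounds rather than merely the crude bound $g\gtrsim\tfrac{1}{2}p(q-1)$ coming from the safe choice $p\mid j$.
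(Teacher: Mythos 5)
Your construction, point count, and reduction of nonsingularity to $\gcd(f,f')=1$ are exactly the paper's (and note $f\equiv 1$ on all of $\FF_q$, so the curve has $2q+2$ points, which is what the theorem intends by ``$2p+2$-pointed''). But there is a genuine gap, and it sits precisely where you place ``the delicate heart of the argument'': you never exhibit a choice of $j$ that yields the stated bounds, proposing instead an uncompleted counting argument over residue classes of $j$. The missing idea is that one should take $j\equiv -(2g+2)\equiv -(2a+2)\pmod p$. In your notation this makes $\overline{M}=2a+2+j\equiv 0\pmod p$, so the second term of the derivative vanishes identically and
\[
f'(x)=\overline{N}\,x^{2g+1}=(2a+2)\,x^{2g+1},
\]
whose only root is $x=0$; since $f(0)=1$, the polynomial $f$ is squarefree outright (provided $a\neq p-1$, so that $\overline{N}\neq 0$). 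No analysis of the candidate equation $x^{j(q-1)}=\overline{M}/\overline{N}$, no root-of-unity congruence, and no counting over residue classes is needed. The least positive $j$ in this class is $2p-(2a+2)$ in general, and $p-(2a+2)$ when $0\le a\le\frac{p-3}{2}$; feeding these into your own window condition $2g+2>j(q-1)$ gives exactly the two bounds $g\ge(p-a-1)(q-1)$ and $g\ge\frac{q-1}{2}(p-2a-2)$. Your ``safe choice'' $p\mid j$ (which forces $\overline{M}=\overline{N}$) is a genuinely different normalization and, as you yourself admit, only gives $g\gtrsim\frac{p}{2}(q-1)$; the counting step you hope will bridge the difference is precisely what the correct congruence class renders unnecessary.

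A secondary remark: your aside on the case $a=p-1$ (there $\overline{N}\equiv 0$, so any $j$ with $p\nmid j$ gives $f'=-\overline{M}x^{M-1}$, whose only root is $0$) is sound, and is in fact better than the paper's own treatment of that case, where the choice $l=p$ makes both exponents of $f$ divisible by $p$, so that $f$ is a $p$-th power in characteristic $p$ and the curve is singular. But be careful with your claim that this gives ``squarefreeness for all $g$, consistent with the vanishing bound'': you still need the window $1\le j(q-1)\le 2g+1$ to be nonempty, so your argument only produces curves for $g\ge\frac{q-1}{2}$ (taking $j=1$), not for every $g$ in that residue class.
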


\begin{proof}
Let $l$ be an integer congruent to $-(2g+2)$ mod $p$ so that $2g+2>l(q-1)>0$. Consider the equation $f(x)=x^{2g+2}-x^{2g+2-l(q-1)}+1$.  We show that the curve defined over $\FF_q$ by the equation $y^2=f(x)$ is nonsingular by showing that $f(x)$ has no roots in common with its derivative.  We make the following computations in $\FF_q$:

\begin{eqnarray*}
f'(x) &=& (2g+2)x^{2g+1}-(2g+2-l(p-1))x^{2g+1-l(p-1)} \\
&=&x^{2g+1-l(p-1)}((2g+2)x^{l(p-1)}-(2g+2)+lp-l) \\
&=& (2g+2)x^{2g+1}
\end{eqnarray*}

It follows that all roots of $f'(x)$ are 0, which is not a root of $f(x)$, so in particular $f(x)$ and $f'(x)$ do not share any roots.  This implies that the curve defined by $y^2=f(x)$ is a nonsingular hyperelliptic curve of genus $g$.  Moreover, for each $x \in \FF_q$ it is clear that $f(x) = 1$ and therefore there are two choices of $y$ which correspond to this $x$, and the fact that $f(x)$ is monic implies that there are two points over $\infty$.  Thus, the curve has $2q+2$ points.

The theorem will now follow from a change of variables.  In particular, the sufficient condition is the existence of a positive integer $l$ such that $2g+2 \equiv l(p-1)$ mod $p$ and $2g+2 > l(q-1)$.  Setting $a$ to be the least residue of $g$ mod $p$, the first condition is equivalent to saying that $l \equiv -(2a+2)$ mod $p$.

If $a=p-1$ then the smallest positive choice of $l$ satisfying this condition will be $l = p$.  The second condition then requires that $g \ge p(\frac{q-1}{2})$.

If $0\le a \le p-1$ then we have that $2 \le (2a+2) \le 2p$ and we can choose $l=-(2a+2)+2p$ as long as $2g+2 >(-(2a+2)+2p)(q-1)$.  This latter condition is equivalent to:
\[g \ge (p-a-1)(q-1)\]

If $a \le \frac{p-3}{2}$, then we can improve this bound by instead setting $l=-(2a+2)+p$. In this case, we find the stronger bound that
\begin{eqnarray*}
2g+2 &>& (-(2a+2)+p)(q-1)\\
g &\ge & \frac{q-1}{2}(p-2a-2)
\end{eqnarray*}
This proves the stated theorem.
\end{proof}

In the case where we are working over a prime field, the above result simplifies to Corollary \ref{C:Bound}.  We get a different type of result by considering the congruence class of $g$ mod $q-1$ rather than mod $q$.

\begin{lemma}\label{T:modp'}
Assume that $2g+2 \ge q$ and set $j$ to be the least residue of $2g+2$ mod $q-1$. Furthermore, let $d = gcd(j,q-1)$ and $k=\frac{2g+2-j}{q-1}$.  Then the curve defined by the equation $y^2= x^{2g+2}-x^{(2g+2)-(q-1)}+1$  is a nonsingular curve of genus $g$ unless $(k-j-1)^d \equiv (k-j)^d$ (mod $p$)
\end{lemma}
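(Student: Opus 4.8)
The plan is to prove the contrapositive. The statement asserts that the curve is nonsingular (hence of genus $g$, since $f(x)=x^{2g+2}-x^{(2g+2)-(q-1)}+1$ has even degree $2g+2$ and, as in the discussion preceding Theorem \ref{T:modp}, nonsingularity is equivalent to $f$ being squarefree over $\overline{\FF_q}$) \emph{whenever} the displayed congruence fails. So it suffices to show that $(k-j-1)^d\equiv(k-j)^d\pmod p$ is a \emph{necessary} condition for $f$ to have a repeated root. Write $N=2g+2$ and $M=N-(q-1)$, and let $\bar N,\bar M$ denote their reductions mod $p$; since $N-M=q-1\equiv -1\pmod p$ we have $\bar M-\bar N\equiv 1$, so $\bar N$ and $\bar M$ cannot both vanish.

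First I would compute, exactly as in the proof of Theorem \ref{T:modp}, that
\[ f'(x)=\bar N x^{N-1}-\bar M x^{M-1}=x^{M-1}\bigl(\bar N x^{q-1}-\bar M\bigr). \]
Because $f(0)=1\neq 0$, the value $x=0$ is never a repeated root, so any repeated root $\alpha$ is nonzero and satisfies $\bar N\alpha^{q-1}=\bar M$. If $\bar N=0$ this would force $\bar M=0$, contradicting $\bar M-\bar N\equiv 1$; hence $\bar N\neq 0$ and I may set $c=\bar M/\bar N=\alpha^{q-1}\in\FF_q^{*}$.

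The heart of the argument is to feed this back into $f(\alpha)=0$. Using $\alpha^{N}=\alpha^{q-1}\alpha^{M}=c\,\alpha^{M}$ gives $f(\alpha)=\alpha^{M}(c-1)+1$, and since $c-1=(\bar M-\bar N)/\bar N=1/\bar N$ I obtain $\alpha^{M}=-\bar N\in\FF_q^{*}$. Now I raise this to the $(q-1)$st power: as $-\bar N\in\FF_q^{*}$ this yields $c^{M}=(\alpha^{M})^{q-1}=1$, and because $M\equiv j\pmod{q-1}$ and $c^{q-1}=1$, it says precisely that the order of $c$ divides $\gcd(j,q-1)=d$, i.e. $c^{d}=1$. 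The remaining step is bookkeeping: from $q\equiv 0\pmod p$ together with $N=k(q-1)+j$ and $M=(k-1)(q-1)+j$ one gets $\bar N\equiv j-k$ and $\bar M\equiv j-k+1\pmod p$, so $c^{d}=1$ unwinds to $\bar M^{d}=\bar N^{d}$, equivalently $(-\bar M)^{d}=(-\bar N)^{d}$, i.e. $(k-j-1)^{d}\equiv(k-j)^{d}\pmod p$, the asserted congruence.

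The step I expect to require the most care is the logical direction rather than any single computation. The lemma only claims nonsingularity when the congruence fails, and indeed raising $\alpha^{M}=-\bar N$ to the $(q-1)$st power is lossy — it records $c$ only up to its order $d$ — so one should resist trying to prove the converse; on small examples (for instance $g=3$, $q=5$) one finds $c^{d}=1$ holding while $f$ is squarefree, confirming that only necessity is true and that necessity is exactly what is needed here. A secondary point to get right is the final translation: tracking the factor $(-1)^{d}$ that appears on both sides and cancels, and verifying $d\mid M$ and $d\mid N$ so that the congruence is legitimately expressed in the stated integer exponents.
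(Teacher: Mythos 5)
Your proof is correct and follows essentially the same route as the paper's: assume a repeated root, use $f'(\alpha)=0$ to get $\alpha^{q-1}=\bar M/\bar N\in\FF_p^{*}$, substitute back into $f(\alpha)=0$ to force a power of $\alpha$ (your $\alpha^{M}$, the paper's $\gamma^{j}$) into $\FF_q^{*}$, then raise to the $(q-1)$st power and use $\gcd(j,q-1)=d$ to conclude $c^{d}=1$, which unwinds to the stated congruence. Your treatment is in fact slightly more careful than the paper's at the step where one divides by $\bar N$ (i.e.\ by $j-k$), since you rule out $\bar N=0$ explicitly via $\bar M-\bar N\equiv 1\pmod p$.
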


\begin{proof}
In order to show that this curve is nonsingular of genus $g$, it will suffice to show that the equation $f(x) = x^{2g+2}-x^{2g+2-q+1}+1$ has no double roots over $\overline{\FF_p}$.  We proceed by contradiction and assume that $\gamma \in \overline{\FF_p}$ is a double root of $f(x)$.  Then $f'(\gamma) = (2g+2) \gamma^{2g+1} - (2g+2-q+1) \gamma^{2g+2-q} = 0$.  Our hypotheses imply that $2g+2 \equiv j-k$ mod $q$, and therefore the following statement holds in $\FF_q$:
\[(j-k) \gamma^{2g+1} = (j-k+1) \gamma^{2g+2-q}\]
Note that $\gamma=0$ is not a root of $f(x)$, so we must have $\gamma^{q-1} = \frac{j-k+1}{j-k}$.  Plugging this into our formula for $f(x)$, we now see that
\[0 = f(\gamma) =  \left(\frac{j-k+1}{j-k}\right)^k \gamma^j - \left(\frac{j-k+1}{j-k}\right)^{k-1} \gamma^j + 1\]
This in particular allows us to deduce that $\gamma^j \in \FF_q$ and therefore $(\gamma^{q-1})^j =1$.  This in particular implies that $(\gamma^{q-1})^d = 1$.  The lemma is an immediate consequence.
\end{proof}

Looking at the quadratic twist, the following result is an immediate corollary:

\begin{corollary}
There exists a pointless hyperelliptic curve of genus $g$ defined over $\FF_q$ if $(k-j-1)^d \not \equiv (k-j)^d$ (mod $p$), where $j,k,d$ are as defined in the statement of Lemma \ref{T:modp'}.
\end{corollary}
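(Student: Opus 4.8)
The plan is to combine Lemma \ref{T:modp'} with the twisting principle established at the start of Section \ref{S:bounds}. Recall that there exists a pointless hyperelliptic curve of genus $g$ over $\FF_q$ precisely when there exists a genus-$g$ hyperelliptic curve attaining the maximum of $2q+2$ points, since passing to the quadratic twist sends an $n$-pointed curve to a $(2q+2-n)$-pointed one. It therefore suffices to exhibit a single nonsingular genus-$g$ curve with $2q+2$ rational points, and the natural candidate is the curve $C\colon y^2=f(x)$ with $f(x)=x^{2g+2}-x^{(2g+2)-(q-1)}+1$ appearing in the lemma.

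First I would verify the point count for $C$. For any nonzero $x\in\FF_q$ one has $x^{q-1}=1$, so $x^{2g+2}=x^{(2g+2)-(q-1)}$ and hence $f(x)=1$; since $2g+2\ge q$ forces $2g+2-(q-1)>0$, we also get $f(0)=1$, so $f(x)=1$ for every $x\in\FF_q$. Each of the $q$ affine fibres thus contributes the two points $(x,\pm 1)$, and because $f$ is monic of even degree $2g+2$ there are two further points lying over $\infty$, giving $2q+2$ points in total.

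Next I would invoke Lemma \ref{T:modp'} directly. The hypothesis of the corollary, namely $(k-j-1)^d\not\equiv(k-j)^d\pmod p$, is exactly the negation of the degenerate condition in the lemma, so the lemma applies verbatim and guarantees that $C$ is nonsingular of genus $g$. Combining this with the point count shows that $C$ is a genus-$g$ hyperelliptic curve with the maximal $2q+2$ rational points. Passing to the quadratic twist $\tilde C\colon y^2=af(x)$ for a nonsquare $a\in\FF_q$ then produces a genus-$g$ hyperelliptic curve with $2q+2-(2q+2)=0$ rational points, which is the desired pointless curve.

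Both ingredients are already in hand: the point count is the same computation carried out for the example curves $C_g$ earlier, and the nonsingularity is precisely the content of the lemma. Consequently I expect no real obstacle here, which is why the statement is labelled an immediate corollary. The only step demanding any care is confirming that the hypothesis matches the negation of the excluded case of Lemma \ref{T:modp'}, so that one may apply the lemma with no additional argument before taking the twist.
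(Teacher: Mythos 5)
Your proof is correct and follows exactly the route the paper intends: the paper dispatches this corollary in one line (``looking at the quadratic twist''), relying on the same two ingredients you spell out, namely the point count $f(x)=1$ for all $x\in\FF_q$ plus two points over $\infty$ (already carried out for the example curve $C_g$ at the start of Section \ref{S:bounds}) and the nonsingularity guaranteed by Lemma \ref{T:modp'}. Your write-up simply makes explicit the details the paper leaves implicit, including the check that $2g+2\ge q$ ensures $f(0)=1$.
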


\begin{example}
As an example of Lemma \ref{T:modp'}, we consider curves over $\FF_q$ of genus $g=q-4$.  In this case, $j=q-5$, so $d=(j,q-1)$ will be equal to $4$ (resp. $d=2$) if $q \equiv 1$ (resp. $q \equiv 3$) (mod $4$).  In particular, Lemma \ref{T:modp'} implies that the curve defined by $y^2 = x^{2q-6}-x^{q-5} + 1$ will be nonsingular except possibly in cases where $5^4=6^4$.  This implies the existence of a pointless hyperelliptic curve of genus $q-4$ unless $p=11$ or $p=61$.
\end{example}

A similar argument will show that there exist pointless hyperelliptic curves defined over $\FF_q$ of genus $g=q-a$ as long as $g \ge \frac{q-1}{2}$ and $p \not | ((2a-2)^{2a-4}-(2a-3)^{2a-4})$.  This gives an explicitly computable finite set of characteristics away from which we will have pointless hyperelliptic curves of a given genus. This approach generalizes, and while stating a result in full generality is difficult, we give one example below of such a result over $\FF_p$.  Note that $2g+2$ and $p-1$ are both even, so at best we have that $(2g+2,p-1)=2$, which is equivalent to the condition that $g+1$ and $\frac{p-1}{2}$ are relatively prime.  For notational convenience, we set $p'=\frac{p-1}{2}$.

\begin{theorem}\label{T:rr}
Assume that $(g+1,p')=1$.  Then the equation $x^{2g+2}-x^{2g+3-p}+1$ has repeated roots over $\overline{\FF_p}$ if and only if $g \equiv \frac{p^2-5}{4}+\frac{jp}{2}$ mod $pp'$ and one of the following cases hold:

\begin{itemize}
\item $p \equiv 3$ mod $8$ and $j \equiv 2k$ mod $4$
\item $p \equiv 7$ mod $8$ and $j \equiv 2k+2$ mod $4$
\item $p \equiv 5$ mod $8$ and $j \equiv 2$ mod $4$
\end{itemize}
\end{theorem}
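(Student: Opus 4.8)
The plan is to promote the one-sided criterion of Lemma~\ref{T:modp'} into an exact characterization by pinning down precisely when the relevant monomial system is solvable. Throughout I retain the notation $j,k,d$ of that lemma specialized to $q=p$, so that $2g+2=k(p-1)+j$ with $j$ the least residue of $2g+2$ modulo $p-1$ and $d=\gcd(j,p-1)$. The hypothesis $(g+1,p')=1$ is equivalent to $d=\gcd(2g+2,p-1)=2$; in particular $j$ is even and $m:=j/2$ is coprime to $p'$, a fact I will exploit at the very end. Since $f(0)=1$, every repeated root $\gamma$ of $f(x)=x^{2g+2}-x^{2g+3-p}+1$ is nonzero, so I may work in $\overline{\FF_p}^{\,*}$.

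First I would reduce ``$f$ has a repeated root'' to the solvability of a system of two monomial equations. Exactly as in Lemma~\ref{T:modp'}, the relation $f'(\gamma)=0$ together with $2g+2\equiv j-k$ and $2g+3\equiv j-k+1 \pmod p$ forces $\gamma^{p-1}=\beta$, where $\beta:=\tfrac{j-k+1}{j-k}\in\FF_p$. Substituting $\gamma^{p-1}=\beta$ into $f(\gamma)=0$ via $2g+2=k(p-1)+j$ and $2g+3-p=(k-1)(p-1)+j$ collapses $f(\gamma)$ to $\beta^{k-1}\gamma^{j}(\beta-1)+1$, whence $\gamma^{j}=\delta$ for an explicit $\delta\in\FF_p$. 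As both steps are reversible, $f$ has a repeated root if and only if the system $\gamma^{p-1}=\beta,\ \gamma^{j}=\delta$ has a solution in $\overline{\FF_p}^{\,*}$.

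Next I would settle this by the standard fact that over an algebraically closed field the system $x^{M}=A,\ x^{N}=B$ (with $A,B\neq 0$ and $M,N$ prime to the characteristic) is solvable if and only if $A^{N/d}=B^{M/d}$, where $d=\gcd(M,N)$; here $M=p-1$, $N=j$, $d=2$, so the criterion reads $\beta^{j/2}=\delta^{p'}$. Squaring gives $\beta^{j}=\delta^{p-1}=1$, hence $\beta^{2}=1$, and since $\beta\neq1$ we must have $\beta=-1$, recovering the necessary condition of the lemma. The bookkeeping step is then to check that $\beta=-1$ is equivalent to $k\equiv j+\tfrac{p+1}{2}\pmod p$, and to repackage the two data ``$j$ is the residue of $2g+2$ mod $p-1$'' and ``$k\equiv j+\tfrac{p+1}{2}\pmod p$'' into the single congruence $g\equiv\tfrac{p^{2}-5}{4}+\tfrac{jp}{2}\pmod{pp'}$, using that $g$ modulo $pp'$ determines $2g+2$ modulo $p(p-1)$ and hence determines both $j$ mod $p-1$ and $k$ mod $p$. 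This produces the displayed congruence of the theorem.

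Finally, with $\beta=-1$ in hand I would simplify $\delta$ to $\tfrac{(-1)^{k-1}}{2}$ and evaluate the residual condition $(-1)^{j/2}=\delta^{p'}$ by Euler's criterion, obtaining $\delta^{p'}=\left(\tfrac{-1}{p}\right)^{k-1}\left(\tfrac{2}{p}\right)$. Feeding in the classical values of $\left(\tfrac{-1}{p}\right)$ and $\left(\tfrac{2}{p}\right)$ according to $p\bmod 8$ unwinds $(-1)^{j/2}=\delta^{p'}$ into congruences for $j,k$ mod $4$, giving $j\equiv 2k$ when $p\equiv3$, $j\equiv 2k+2$ when $p\equiv7$, and $j\equiv2$ when $p\equiv5\pmod 8$. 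Here the coprimality of $m=j/2$ with $p'$ is decisive: when $4\mid p'$ (that is, $p\equiv1,5\pmod 8$) it forces $m$ odd, i.e.\ $j\equiv2\pmod4$, which both renders the $p\equiv5$ condition automatic and contradicts the requirement $j\equiv0\pmod4$ that arises for $p\equiv1\pmod8$, thereby explaining why no $p\equiv1$ case appears. I expect the main obstacle to be precisely this final coordination: stating and applying the abstract solvability criterion cleanly, tracking signs correctly through the Legendre-symbol evaluation, and reconciling the four-case split on $p\bmod 8$ with the coprimality hypothesis so that exactly the three listed cases survive.
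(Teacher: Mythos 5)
Your proposal is correct, and it replaces the one genuinely delicate step of the paper's argument with a different (and cleaner) device. Both proofs share the same skeleton: specialize Lemma \ref{T:modp'} to reduce ``$f$ has a repeated root'' to solvability of the monomial system $\gamma^{p-1}=\beta$, $\gamma^{j}=\delta$ (with $\beta=\frac{j-k+1}{j-k}$ and $\delta\in\FF_p$ explicit); deduce $\beta=-1$ from $\beta\neq 1$ and $\beta^{2}=1$; and repackage $\beta=-1$ together with $g\equiv\frac{j-2}{2}\pmod{p'}$ into the displayed congruence via the Chinese Remainder Theorem --- the paper does exactly this bookkeeping. Where you differ is the endgame. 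The paper converts solvability into the existence of a quadratic nonresidue $\alpha\in\FF_p$ with $\alpha^{j/2}=(-1)^{k}p'$ (via $\alpha=\gamma^{2}$) and settles that by a case analysis on $j/2$ odd versus even, the even case resting on a recursive halving argument that is only sketched (``if $j/4$ is even then we can repeat the process''), and it compresses the final $p\bmod 8$ translation into ``The theorem follows.'' You instead invoke the general criterion that $x^{M}=A$, $x^{N}=B$ is solvable in $\overline{\FF_p}^{\,*}$ if and only if $A^{N/d}=B^{M/d}$, which applies here since $M=p-1$ and $N=j$ are both prime to $p$, and which collapses everything into the single testable equation $(-1)^{j/2}=\delta^{p'}=\left(\frac{-1}{p}\right)^{k-1}\left(\frac{2}{p}\right)$; the three listed cases, and the absence of a $p\equiv 1\pmod 8$ case (since $\gcd(j,p-1)=2$ with $4\mid p-1$ forces $j\equiv 2\pmod 4$), then fall out of the standard values of the two Legendre symbols. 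Your route buys rigor exactly where the paper is informal, at the cost of quoting (or proving, which is easy with roots of unity) the abstract solvability lemma. Two small repairs: your parenthetical ``$4\mid p'$'' should read ``$2\mid p'$'' (equivalently $4\mid p-1$, i.e.\ $p\equiv 1,5\pmod 8$), as your own gloss makes clear you intend; and, like the paper, you implicitly assume $j\not\equiv k\pmod p$ so that $\beta$ is defined --- in the excluded case $f'$ vanishes only at $0$, so $f$ is separable and the theorem's congruence fails as well, so the equivalence is unaffected, but this deserves a sentence.
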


\begin{proof}
The hypotheses imply that $gcd(2g+2,p-1) = 2$.  It follows from Lemma \ref{T:modp'} that $f(x)$ has no repeated roots unless $\left(\frac{k-j-1}{k-j}\right)^2 \equiv 1$ mod $p$, where $j$ is as in the statement of Lemma \ref{T:modp'}.  Clearly $k-j-1 \not \equiv k-j$, so it suffices to consider the case where $k-j-1 \equiv -(k-j)$.  Computing mod $p$:

\begin{eqnarray*}
k-j-1 & \equiv & -(k-j) \\
2(j-k) &\equiv &-1 \\
2(2g+2) & \equiv & -1 \\
4g & \equiv & -5\\
g & \equiv & \frac{p^2-5}{4} \\
\end{eqnarray*}

This last congruence follows from the fact that the multiplicative inverse of $4$ mod $p$ can always be expressed as $\frac{1-p^2}{4}$. Recall that we also know that $g \equiv \frac{j-2}{2}$ mod $p'$ and therefore it follows from the Chinese Remainder Theorem that $g \equiv \frac{p^2-5}{4}+p\cdot \frac{j}{2}$ mod $pp'$.

Furthermore, we have shown above that $\gamma$ will be a double root of $f(x)$ if and only if it is a root such that $\gamma^{p-1} \equiv -1$ mod $p$.  In particular, this implies that $\gamma^j \equiv (-1)^k p'$.  Alternatively, there will be a double root if and only if there exists an element $\alpha \in \FF_p$ such that $\alpha$ is a quadratic nonresidue such that $\alpha^{j/2} \equiv (-1)^k p'$.

Assume $j/2$ is odd.  Then in particular $gcd(j/2,p-1)=1$ and therefore there is a unique $\alpha$ satisfying the equation $\alpha^{j/2} \equiv (-1)^k p'$.  Moreover, it will be a nonresidue exactly when $(-1)^k p'$ is.

Next, assume $j/2$ is even.  In  particular, we note that this implies that $p \equiv 3$ mod $4$.  If $(-1)^k p'$ is a quadratic nonresidue then there are no $\alpha$ which satisfy the above equation.  On the other hand, if it is a quadratic residue then there will be two choices of $\beta$ such that $\beta^2 \equiv (-1)^k p'$ and, because $-1$ is a nonresidue mod $p$ it will follow that exactly one of these two choices will itself be a quadratic residue.  Assume $j/4$ is odd.  Then it follows from the fact that $gcd(j/4,p-1) = 1$ that there is a unique choice of $\alpha$ so that $\alpha^{\frac{j}{4}} \equiv \beta$ and by choosing $\beta$ to be the nonresidue we will get a unique nonresidue $\alpha$ such that $\alpha^{\frac{j}{2}} \equiv \beta^2 \equiv(-1)^k p'$.  In particular, the curve will be singular when $(-1)^k p'$ is a quadratic residue. If $j/4$ is even then we can repeat the process, and a similar argument will show that anytime $(-1)^k p'$ is a quadratic residue we will find a unique nonresidue $\alpha$ with $\alpha^{\frac{j}{2}} = (-1)^k p'$.

In particular, we have shown that if $j \equiv 2$ mod $4$ then the curve will be singular if $(-1)^k p'$ is a nonresidue and if $j \equiv 0$ mod $4$ it will be singular if $(-1)^k p'$  is a residue.  The theorem follows.
\end{proof}

We conclude this section with a proof of Corollary \ref{C:relprime}.

\begin{proof}
From the above Theorem, it follows that curve defined by $y^2=x^{2g+2}-x^{2g+2-(p-1)}+1$ is nonsingular unless $g \equiv \frac{p^2-5}{4}+p\cdot \frac{j}{2}$ mod $pp'$.  Note that if $j<p'$ then the quantity on the right hand side is less than $pp'$ and if $p' < j < p-1$ then it is less than $2pp'$. (Note that $j \ne p'$ by our assumption on the $gcd(j,p-1)$.)

We now turn to Theorem \ref{T:modp}, which tells us that if $2g+2 \equiv p'$ mod $p$ then we can construct pointless hyperelliptic curves as long as $g> \frac{p^2-5}{4}$.

In particular, the only cases not covered by the theorems above are when $j$ is an even integer relatively prime to $p-1$ in the interval $(p',p-1)$ and $g = \frac{2p+2pj-p^2-5}{4}$.  In each of these cases, the equation $x^{2g+2}-x^{2g+2-(p-1)}+1$ has repeated roots by Theorem \ref{T:rr} but the genus is too small to consider curves of the form $x^{2g+2}-x^{2g+2-\ell(p-1)}+1$ for $\ell>1$.

\end{proof}

\section{Numerical Results} \label{S:numerics}

In this section, we construct tables of genera unobtainable using the above theorems for all odd primes less than 100. For each prime $p$, we begin by checking, for all genera less than the bound established by Corollary \ref{C:Bound}, whether the conditions of Theorem \ref{T:modp} are satisfied.  We thereby obtain a set of genera unobtainable using that result alone.  We further prune this set using the conditions of Theorem \ref{T:modp'} and of Corollary \ref{C:relprime}. Another result that will be useful in our computations is the following theorem.

\begin{theorem}\label{T:double}
Let $C$ be a pointless hyperelliptic curve of genus $g$ defined by the equation $y^2=f(x)$.  Then the equation $y^2=f(x^2)$ defines a pointless hyperelliptic curve of genus $2g+1$.
\end{theorem}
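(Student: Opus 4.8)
The plan is to verify three things in turn: that $y^2=f(x^2)$ defines a hyperelliptic curve, that its genus is $2g+1$, and that it has no $\FF_q$-rational points. The first step is to observe that pointlessness of $C$ pins down the shape of $f$. If $\deg f$ were odd, then the smooth model of $y^2=f(x)$ would have a single point at infinity, which is always $\FF_q$-rational, contradicting pointlessness; hence $\deg f = 2g+2$. Working at infinity in the coordinate $u=1/x$, the two points lying over $u=0$ are $\FF_q$-rational exactly when the leading coefficient of $f$ is a nonzero square, so pointlessness forces this leading coefficient to be a non-square. Finally, pointlessness of the affine part says precisely that $f(t)$ is a non-square, and in particular nonzero, for every $t\in\FF_q$.

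Next I would compute the genus of $y^2=f(x^2)$. Since $\deg f=2g+2$, the polynomial $f(x^2)$ has degree $4g+4$, so it suffices to check that $f(x^2)$ is separable; the genus will then be $(4g+4-2)/2 = 2g+1$, and since the degree is at least $5$ the equation defines a nonsingular hyperelliptic curve. For separability, note that $(f(x^2))' = 2x\,f'(x^2)$, so a common root of $f(x^2)$ and its derivative is either $x=0$, which is impossible because $f(0)\ne 0$, or a value with $f(x^2)=f'(x^2)=0$, which would make $x^2$ a repeated root of $f$. But $f$ is separable because $C$ is nonsingular, so no such root exists. Concretely, the $4g+4$ roots of $f(x^2)$ over $\overline{\FF_q}$ are the two square roots of each of the $2g+2$ distinct nonzero roots of $f$, and these are pairwise distinct.

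It remains to show the doubled curve is pointless, which is where the hypothesis on $C$ does the work. An affine $\FF_q$-point $(x_0,y_0)$ would satisfy $y_0^2=f(x_0^2)$ with $x_0^2\in\FF_q$; but $f$ takes only non-square values on $\FF_q$, so no such $y_0$ exists. For the points at infinity, the leading coefficient of $f(x^2)$ equals that of $f$, which is a non-square, so the two points over infinity are conjugate over $\FF_{q^2}$ and are not $\FF_q$-rational. Hence $y^2=f(x^2)$ has no $\FF_q$-rational points whatsoever.

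I do not expect a serious obstacle: the only point that needs care is the separability of $f(x^2)$, together with the observation that $0$ is not a root of $f$, and both follow cleanly from the nonsingularity and pointlessness of $C$. The conceptual heart of the argument is simply recognizing that the substitution $x\mapsto x^2$ realizes a degree-two cover under which an $\FF_q$-point of the new curve would descend to an $\FF_q$-point of the pointless curve $C$, which cannot exist.
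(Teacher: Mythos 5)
Your proof is correct, but it takes a genuinely different route from the paper's. You verify everything directly: pointlessness of $C$ forces $\deg f = 2g+2$, a non-square leading coefficient, and non-square (hence nonzero) values $f(t)$ for every $t \in \FF_q$; separability of $f(x^2)$, via $(f(x^2))' = 2x\,f'(x^2)$ together with $f(0) \neq 0$ and the separability of $f$, then gives nonsingularity and genus $2g+1$; and an explicit case analysis of affine points and points at infinity gives pointlessness. The paper instead observes that neither $0$ nor $\infty$ is a branch point of the hyperelliptic map of $C$, forms the normalization of the fibre product of $C$ with the double cover $z^2 = x$ of the line (branched exactly at $0$ and $\infty$), and cites general results on fibre products of hyperelliptic curves for both the genus count and the absence of rational points, identifying the resulting curve with $y^2 = f(x^2)$ after a change of variables. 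Your argument is more elementary and self-contained, requiring no outside references, which is a real advantage; the paper's choice is deliberate, however, since the fibre-product technique is reused for Theorem \ref{T:factor}, where the auxiliary quadratic cover shares one or two roots with $f$ and the same construction yields pointless curves of genus $2g$ or $2g-1$ --- cases that your substitution argument does not handle as stated, because there the branch loci overlap and the genus computation changes.
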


One can prove this theorem in several manners.  We give a proof involving fibre products as the technique will be useful later on.

\begin{proof}
The fact that $C$ has no points defined over $\FF_q$ implies in particular that $f(x)$ has degree $2g+2$ and that $f(0) \ne 0$.  In particular, neither $0$ nor $\infty$ is a ramification point of $f(x)$.  We wish to consider the normalization of the fibre product of $C$ with the hyperelliptic cover given by $z^2=x$ which is branched only at the points $x=0,\infty$.  It follows from results about fibre products of hyperelliptic curves (see \cite{GP}, \cite{GV} for details and similar constructions) that this curve will have genus $2g+1$ and will have no points defined over $\FF_q$.  This curve is equivalent to the one defined by $y^2=f(x^2)$ after a change of variables.
\end{proof}

The table in Figure 1 gives a complete list of genera that our methods are unable to construct (`missed genera') for $p \le 23$.  This list gets lengthy as $p$ gets larger, so for $p>23$ we instead present the number of missed genera and the largest missed genus.

\begin{figure}[h]
\begin{center}
\begin{tabular}{|c|p{10cm}|}
\hline
$p$ & Missed Genera \\ \hline
3 & $\emptyset$ \\ \hline
5 & 3, 7 \\ \hline
7 & 2, 5, 8, 11, 14, 17 \\ \hline
11 & 2, 3, 4, 9, 14, 19, 24, 29, 34, 39, 44, 49 \\ \hline
13 & 2, 3, 4, 5, 8, 11, 17, 20, 23, 35, 41, 47, 71 \\ \hline
17 & 2, 3, 4, 5, 6, 7, 15, 31, 63, 127 \\ \hline
19 & 2, 3, 4, 5, 6, 7, 8, 14, 17, 26, 32, 35, 38, 44, 50, 53, 62, 71, 80, 89, 98, 107, 116, 125, 134, 143, 152, 161 \\ \hline
23 & 2, 3, 4, 5, 6, 7, 8, 9, 10, 21, 32, 43, 54, 65, 76, 87, 98, 109, 120, 131, 142, 153, 164, 175, 186, 197, 208, 219, 230, 241 \\ \hline
\end{tabular}
\caption{All Missed Genera For Small Primes}
\end{center}
\end{figure}

\begin{figure}[h]
\begin{center}
\begin{tabular}{|c|c|c|}
\hline
$p$ & Number Missed & Largest Missed \\ \hline
29 & 31 & 391 \\ \hline
31 & 52 & 449 \\ \hline
37 & 52 & 647 \\ \hline
41 & 35 & 759 \\ \hline
43 & 76 & 881 \\ \hline
47 & 66 & 1057 \\ \hline
53 & 67 & 1351 \\ \hline
59 & 84 & 1681 \\ \hline
61 & 95 & 1799 \\ \hline
67 & 121 & 2177 \\ \hline
71 & 133 & 2449 \\ \hline
73 & 86 & 2483 \\ \hline
79 & 143 & 3041 \\ \hline
83 & 120 & 3361 \\ \hline
89 & 82 & 3079 \\ \hline
97 & 83 & 4607 \\ \hline
\end{tabular}
\caption{Data on Genera Missed For $p <100$}
\end{center}
\end{figure}

We note that only results proven in this note are used even when the tables can be improved by other results. For example, from Theorem \ref{T:smallgenus}, we know that there exists a pointless hyperelliptic curve of genus 3 over $\FF_{17}$.  Theorem \ref{T:double} then allows us to obtain pointless curves of genera 7, 15, 31, 63, and 127.  One can similarly obtain select other genera listed in the above chart.

Moreover, these numerical results do not take into account how $f(x)$ factors over $\FF_p$, as these computations are beyond the scope of the program used. However, one can prove the following result.

\begin{theorem}\label{T:factor}
Let $C$ be a pointless hyperelliptic curve of genus $g$ defined by the equation $y^2=f(x)$.
\begin{itemize}
\item If $f(x)$ has any factor over $\FF_p$ that is not given by an irreducible quadratic equation then there exists a pointless hyperelliptic curve of genus $2g$.
\item If $f(x)$ has a factor defined by an irreducible quadratic equation over $\FF_p$ then there exists a pointless hyperelliptic curve of genus $2g-1$.
\end{itemize}
\end{theorem}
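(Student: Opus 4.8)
The plan is to adapt the fibre-product method used for Theorem~\ref{T:double}. Given the pointless curve $C\colon y^2=f(x)$ and a factor $h\mid f$ over $\FF_p$, I would form the normalized fibre product $C'$ of $C$ with the double cover $z^2=h(x)$, equivalently the pullback of $C$ along a degree-two map $\psi\colon \mathbb{P}^1\to\mathbb{P}^1$. Three observations carry the argument. First, $C$ is one of the three intermediate double covers of the $(\mathbb{Z}/2\mathbb{Z})^2$-cover $C'\to\mathbb{P}^1$, so there is an $\FF_p$-morphism $C'\to C$; because $C(\FF_p)=\emptyset$, this forces $C'(\FF_p)=\emptyset$, and pointlessness is automatic for every choice of $h$. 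Second, whenever one of the three intermediate covers has genus $0$ it is isomorphic to $\mathbb{P}^1$ over the finite field $\FF_p$, and the degree-two map of $C'$ onto it exhibits $C'$ as hyperelliptic. Third, the genus of $C'$ is the sum of the genera of the three intermediate covers (see \cite{GP},\cite{GV}); equivalently, Riemann--Hurwitz for $C'\to C$ gives $g'=2g-1+R/2$, where a local computation shows that a branch point of $\psi$ lying over a root of $f$ contributes $0$ to $R$ (the two covers ramify simultaneously and $C'\to C$ splits there) while one lying over a non-root contributes $2$.

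For the second bullet I would take $h=q$ to be the irreducible quadratic factor, with conjugate roots $\beta,\bar\beta$. The cover $z^2=q(x)$ has genus $0$, hence is $\mathbb{P}^1$ over $\FF_p$, so $C'$ is hyperelliptic, and it is pointless by the first observation. The three intermediate covers are $C$ of genus $g$, the genus-$0$ curve $z^2=q$, and $w^2=f/q$ of genus $g-1$, so $g'=g+0+(g-1)=2g-1$. The same count follows from Riemann--Hurwitz, since both branch points $\beta,\bar\beta$ are roots of $f$ and hence $R=0$.

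For the first bullet the target $g'=2g$ forces $R=2$, and this is the step I expect to be the main obstacle. The pullback construction cannot produce it: a degree-two map $\psi$ defined over $\FF_p$ has a Galois-stable branch pair, which is either two $\FF_p$-rational points (neither a root of $f$, since $C$ is pointless, so $R=4$) or a conjugate pair (both roots of $f$ exactly when the pair divides $f$, so $R=0$, and otherwise $R=4$), yielding only the genera $2g-1$ and $2g+1$. Indeed, the same bookkeeping indicates that no hyperelliptic curve of genus $2g$ arises as a double cover of $C$ at all: its hyperelliptic involution is central and would commute with the covering involution, making $C'$ a fibre product of the type above, whose three intermediate genera cannot sum to $2g$. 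The genus-$2g$ curve must therefore be built from the factorization data directly rather than as a cover of $C$, and its pointlessness can no longer be inherited from $C$. The approach I would pursue is to use the non-quadratic factor $q$, together with the complementary factor $f/q$, to write down an explicit degree-$(4g+2)$ polynomial $G$ whose quadratic twist is pointless, the natural candidates arising by applying the substitution $x\mapsto x^2$ selectively to the factors so as to land in degree $4g+2$ while preserving that the product is a non-residue at every point of $\FF_p$. Verifying that such a $G$ is simultaneously squarefree, so that the genus is exactly $2g$, and everywhere a non-residue with non-residue leading coefficient, so that the twist is pointless, is the crux, and it is here that the non-quadratic hypothesis on $q$ must be used in an essential way.
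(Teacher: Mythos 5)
Your proof of the second bullet is correct and is exactly the paper's intended argument: form the normalized fibre product of $C$ with the conic $z^2=q(x)$, where $q$ is the irreducible quadratic factor of $f$. It is pointless because it admits an $\FF_p$-morphism to $C$, it is hyperelliptic because a genus-zero curve over a finite field always has a rational point, and its genus is $g+0+(g-1)=2g-1$, either by the genus formula for $(\Z/2\Z)^2$-covers or by Riemann--Hurwitz with $R=0$. This is precisely the paper's ``quadratic sharing two roots with $f(x)$.''

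For the first bullet, however, your proposal has a genuine gap: after arguing that no double cover of $C$ can work, you propose to build an explicit squarefree, everywhere-nonresidue polynomial $G$ of degree $4g+2$ out of the factors of $f$, and you leave unverified exactly the two properties that constitute the statement (squarefreeness and nonresiduity at every point), calling them ``the crux.'' So the first bullet is not proved. What is striking is that your obstruction is correct and applies verbatim to the paper's own omitted proof. The paper's sketch calls for a fibre product with a quadratic sharing \emph{exactly one} root with $f$, which by your $R=2$ computation would indeed give genus $2g$; but pointlessness forces $f$ to have no roots in $\FF_p$, and the root pair of any quadratic over $\FF_p$ is Galois-stable (two rational points or a conjugate pair), so it meets the Galois-stable, rational-point-free root set of $f$ in either zero or two points, never one. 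Your stronger argument --- that the hyperelliptic involution of a genus-$2g$ double cover of $C$ would be central, exhibiting that cover as a pullback of $C$ along a degree-two map of $\PP$ whose Galois-stable branch pair can only yield genera $2g\pm 1$ --- closes off every double-cover route, not merely the paper's particular one. Thus the first bullet, if it is to be established, requires a construction that does not dominate $C$; neither your proposal nor the paper actually supplies one, and to that extent you have identified a real flaw in the paper's proof rather than merely failed to reproduce it.
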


The proof of Theorem \ref{T:factor} is similar to the proof of Theorem \ref{T:double}.  However, instead of taking a fibre product with a cover defined by a quadratic equation sharing no roots with $f(x)$ we instead take the fibre product with a cover defined by a quadratic equation which shares either one or two roots with $f(x)$, whose existence is guaranteed by the hypothesis.  We omit the details of the proof.

\bibliographystyle{amsplain}
\bibliography{pointless}

\end{document}